\providecommand{\U}[1]{\protect\rule{.1in}{.1in}}
\newtheorem{theorem}{Theorem}
\newtheorem{corollary}[theorem]{Corollary}
\newtheorem{definition}[theorem]{Definition}
\newtheorem{example}[theorem]{Example}
\newtheorem{lemma}[theorem]{Lemma}
\newtheorem{proposition}[theorem]{Proposition}
\newtheorem{remark}[theorem]{Remark}
\newenvironment{proof}[1][Proof]{\noindent\textbf{#1.} }{\ \rule{0.5em}{0.5em}}
\begin{document}

\title{An Arithmetic Metric}
\author{Diego Dominici \thanks{e-mail: dominicd@newpaltz.edu}\\Department of Mathematics\\State University of New York at New Paltz\\1 Hawk Dr.\\New Paltz, NY 12561-2443\\USA\\Phone: (845) 257-2607\\Fax: (845) 257-3571 }
\maketitle

\begin{abstract}
What is the distance between 11 (a prime number) and 12 (a highly composite number)? If your answer is 1, then ask yourself "is this reasonable?"
In this work, we will introduce a distance between natural numbers based on their arithmetic properties, instead of their position on the real line. 
\end{abstract}

Keywords: Distance, prime factors, p-adic valuation.

MSC-class: 54E35 (Primary) 11A05 , 11E95, 11A25 (Secondary).

\section{Introduction}

When the concepts of distance and metric space are introduced in a standard
advanced calculus course, it is customary to present some examples of metrics.
These usually consist of the absolute value (for $\mathbb{R}$), the $l_{p}$
norms (for $\mathbb{R}^{n}$ and $\mathbb{R}^{\mathbb{N}}$) and the $L_{p}$
norms (for $\mathbb{R}^{\mathbb{R}})$ \cite{MR0085462}. In most courses, the
only "exotic" metric that students learn about is the discrete metric%
\[
d(x,y)=\left\{
\begin{array}
[c]{c}%
0\quad\text{if \ }x=y\\
1\quad\text{if \ }x\neq y
\end{array}
\right\vert .
\]

For those students, whose main interest is algebra, these examples seem to
imply that the theory of metric spaces is something that they should not care
about (except for the brief moment when they need to pass the required course!).

The objective of this article is to provide a non-trivial example of a metric
that should be interesting to algebraists and analysts alike. It should also
appeal to those interested in graph theory and discrete mathematics.

To motivate our definition, let's consider the following question: What is the
distance between $11$ and $12?$ As real numbers, the answer is of course
$d(11,12)=\left\vert 12-11\right\vert =1.$ However, if we take into account
their arithmetic properties, they are very different numbers indeed. While
$11$ is a prime number, $12$ is a highly composite number, i.e., it has more
divisors than any smaller natural number. Thus, it seems that the distance
between them as natural numbers should be based on divisibility rather than on
their location on the real line.

We can look at the problem from a slightly different perspective, if we
consider the \emph{Hasse diagram} of the set $I_{12}=\left\{  1,2,\ldots
,12\right\}  ,$ i.e., the graph formed with numbers $1,2,\ldots,12$ as
vertices and edges connecting two numbers $a<b$ iff $a|b$ (see Figure 1). If
we define the distance between two numbers in the Hasse diagram as the number
of edges in a shortest path connecting them, then clearly we have
$d(11,12)=4.$ This result seems more satisfactory than the previous
calculation using the absolute value.

\begin{figure}[ptb]
\begin{center}
\resizebox{6cm}{!}{\includegraphics{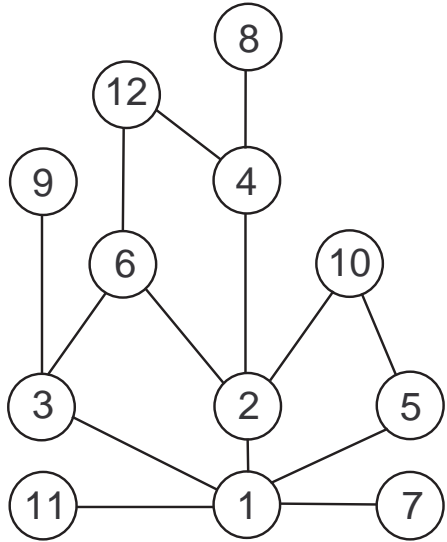}}
\end{center}
\caption{The Hasse diagram of the set $I_{12}$.}%
\label{1}%
\end{figure}

If we carefully examine the Hasse diagram, we conclude that our proposed
distance should have the following properties:

\begin{enumerate}
\item If $a<b,$ then%
\begin{equation}
d(a,b)=1\Leftrightarrow\exists p\in\mathbb{P}\text{ \ such that }%
b=ap,\label{Prop1}%
\end{equation}
where%
\[
\mathbb{P}=\left\{  p\in\mathbb{N\quad}|\quad p\text{ \ is a prime
number}\right\}  .
\]
In other words, the only way of advancing from one number to another $1$ unit
of distance is by multiplying the number by a prime.

\item If $l=\operatorname{lcm}(a,b)$ and $g=\gcd(a,b),$ then%
\begin{equation}
d(a,l)+d(l,b)=d(a,g)+d(g,b),\label{Prop2}%
\end{equation}
which says that the distance between $a$ and $b$ going through
$\operatorname{lcm}(a,b)$ is the same as going through $\gcd(a,b)$ (see Figure 2).
\end{enumerate}

\begin{figure}[ptb]
\begin{center}
\resizebox{6cm}{!}{\includegraphics{gcd.eps}}
\end{center}
\caption{The Hasse diagram of the set $\left\{a,b,\gcd(a,b),\operatorname{lcm}(a,b)\right\}$.}%
\label{1}%
\end{figure}

In the following section, we will define $d(a,b)$ precisely and prove that it
satisfies $1$ and $2$.

\section{Main result}

We begin by reviewing some standard notations.

\begin{definition}
If $n\in\mathbb{N}$ and $p\in\mathbb{P}$, we define, $\nu_{p}(n),$ the
$p$-adic valuation of $n,$ by%
\[
\nu_{p}(n)=\max\left\{  k\in\mathbb{N}_{0}\mathbb{\quad}|\quad p^{k}%
|n\right\}  ,
\]
where $\mathbb{N}_{0}=\mathbb{N\cup}\left\{  0\right\}  .$ It follows from the
fundamental theorem of arithmetic \cite{MR1739433} that%
\[
n=%
{\displaystyle\prod\limits_{p\in\mathbb{P}}}
p^{\nu_{p}(n)}.
\]
We define%
\begin{equation}
\Omega\left(  n\right)  =%
{\displaystyle\sum\limits_{p\in\mathbb{P}}}
\nu_{p}(n). \label{Omega}%
\end{equation}

\end{definition}

The function $\Omega\left(  n\right)  $ is called (surprise!) the \emph{Big
Omega function} \cite[p. 354]{MR568909}. It represents the total number of
prime factors of $n$, counting prime factors with multiplicity. The following
lemma states that $\Omega\left(  n\right)  $ is totally additive.

\begin{lemma}
If $a,b\in\mathbb{N},$ then%
\begin{equation}
\Omega\left(  ab\right)  =\Omega\left(  a\right)  +\Omega\left(  b\right)  .
\label{ab}%
\end{equation}

\end{lemma}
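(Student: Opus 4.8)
The plan is to prove the total additivity of $\Omega$ directly from its definition as a sum of $p$-adic valuations, reducing the whole statement to the single fact that each valuation $\nu_p$ is itself additive. First I would observe that, by the definition in \eqref{Omega},
\[
\Omega(ab)=\sum_{p\in\mathbb{P}}\nu_p(ab),\qquad \Omega(a)+\Omega(b)=\sum_{p\in\mathbb{P}}\bigl(\nu_p(a)+\nu_p(b)\bigr),
\]
so it suffices to establish the termwise identity $\nu_p(ab)=\nu_p(a)+\nu_p(b)$ for every prime $p$; summing over $\mathbb{P}$ then yields \eqref{ab} immediately.

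To prove the key valuation identity, I would appeal to the unique factorization guaranteed by the fundamental theorem of arithmetic, which is already invoked in the Definition. Writing $a=\prod_p p^{\nu_p(a)}$ and $b=\prod_p p^{\nu_p(b)}$, their product is $ab=\prod_p p^{\nu_p(a)+\nu_p(b)}$. Since this is a prime factorization of $ab$ and such a factorization is unique, comparing the exponent of a fixed prime $p$ on both sides of $ab=\prod_p p^{\nu_p(ab)}$ forces $\nu_p(ab)=\nu_p(a)+\nu_p(b)$. Alternatively, one can argue more elementarily straight from the $\max$-definition of $\nu_p$: if $p^{\alpha}\,\|\,a$ and $p^{\beta}\,\|\,b$ (meaning $\alpha=\nu_p(a)$, $\beta=\nu_p(b)$), then $p^{\alpha+\beta}\mid ab$, while $p^{\alpha+\beta+1}\nmid ab$ because otherwise $p$ would divide the cofactor $\bigl(a/p^{\alpha}\bigr)\bigl(b/p^{\beta}\bigr)$, which is coprime to $p$ by maximality of $\alpha$ and $\beta$ together with Euclid's lemma.

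I would most likely present the factorization argument, since the machinery is already set up in the Definition and it makes the exponent bookkeeping transparent. The one point requiring a word of care is that both products over $\mathbb{P}$ are in fact finite: all but finitely many $\nu_p$ vanish, so the infinite products and sums are well defined and the rearrangement of terms is legitimate. Beyond that, I do not anticipate a genuine obstacle; the result is essentially a restatement of unique factorization, and the only substantive ingredient is Euclid's lemma (or equivalently the uniqueness clause of the fundamental theorem), which underlies why exponents of a given prime simply add under multiplication.
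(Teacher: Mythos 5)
Your proof is correct. Note that the paper itself gives no proof of this lemma at all: it is stated as a known fact (the total additivity of the Big Omega function) and used freely afterwards, so your argument supplies a proof where the author left none. Your reduction of \eqref{ab} to the termwise identity $\nu_p(ab)=\nu_p(a)+\nu_p(b)$, justified either by comparing exponents in the unique factorization of $ab$ or by the elementary argument with Euclid's lemma, is the standard route and is exactly what the paper implicitly relies on; your remark that all but finitely many valuations vanish, so that the sums over $\mathbb{P}$ are finite and may be rearranged freely, addresses the only point of rigor worth flagging. Either of your two variants would stand as a complete proof.
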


We have now all the necessary elements to define our distance. We denote by
$\mathbb{N}_{0}$ the set $\mathbb{N}\cup\left\{  0\right\}  .$

\begin{definition}
If $a,b\in\mathbb{N},$ we define the function $d:\mathbb{N\times N}%
\rightarrow\mathbb{N}_{0}$ by%
\begin{equation}
d(a,b)=\mathbf{\Omega}\left[  \operatorname{lcm}(a,b)\right]  -\mathbf{\Omega
}\left[  \gcd(a,b)\right]  . \label{d}%
\end{equation}

\end{definition}

Although possible, it is a bit complicated to prove that $d(a,b)$ is a
distance using the definition (\ref{d}). The following theorem gives an
alternative representation for $d(a,b),$ from which it is clear that $d(a,b)$
is indeed a metric.

\begin{theorem}
If $a,b\in\mathbb{N},$ then%
\begin{equation}
d(a,b)=%
{\displaystyle\sum\limits_{p\in\mathbb{P}}}
\left\vert \nu_{p}(a)-\nu_{p}(b)\right\vert . \label{dabs}%
\end{equation}

\end{theorem}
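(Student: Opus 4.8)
The plan is to reduce everything to a prime-by-prime computation using the valuation descriptions of $\gcd$ and $\operatorname{lcm}$. First I would record the two standard facts that, for every prime $p$,
\[
\nu_{p}(\operatorname{lcm}(a,b))=\max\{\nu_{p}(a),\nu_{p}(b)\},\qquad\nu_{p}(\gcd(a,b))=\min\{\nu_{p}(a),\nu_{p}(b)\},
\]
which follow from the fundamental theorem of arithmetic together with the characterizations of $\operatorname{lcm}$ and $\gcd$ as, respectively, the least common multiple and the greatest common divisor.

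Next I would apply the definition (\ref{Omega}) of $\Omega$ directly to each of $\operatorname{lcm}(a,b)$ and $\gcd(a,b)$, obtaining
\[
\Omega\left[\operatorname{lcm}(a,b)\right]=\sum_{p\in\mathbb{P}}\max\{\nu_{p}(a),\nu_{p}(b)\},\qquad\Omega\left[\gcd(a,b)\right]=\sum_{p\in\mathbb{P}}\min\{\nu_{p}(a),\nu_{p}(b)\}.
\]
Since only finitely many primes divide $a$ or $b$, both sums are finitely supported, so I may subtract them term by term. Substituting into the definition (\ref{d}) of $d$ then gives
\[
d(a,b)=\sum_{p\in\mathbb{P}}\left(\max\{\nu_{p}(a),\nu_{p}(b)\}-\min\{\nu_{p}(a),\nu_{p}(b)\}\right).
\]

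The final step is the elementary identity $\max\{x,y\}-\min\{x,y\}=\left\vert x-y\right\vert$, valid for all integers $x,y$ (checked by splitting into the cases $x\geq y$ and $x<y$), applied with $x=\nu_{p}(a)$ and $y=\nu_{p}(b)$ inside each summand. This yields (\ref{dabs}) at once.

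I do not anticipate a serious obstacle on this route; the only points requiring care are confirming the $\max$/$\min$ valuation formulas for $\operatorname{lcm}$ and $\gcd$ rather than taking them for granted, and observing that the sums are finitely supported so that the termwise subtraction is legitimate. The additivity lemma (\ref{ab}) is not strictly needed here, because the definition (\ref{Omega}) already expresses $\Omega$ as a sum of valuations; it would enter only if one preferred to evaluate $\Omega$ starting from an explicit prime factorization instead.
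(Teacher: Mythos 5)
Your proposal is correct and follows essentially the same route as the paper's own proof: express the valuations of $\operatorname{lcm}(a,b)$ and $\gcd(a,b)$ as $\max$ and $\min$ of $\nu_{p}(a),\nu_{p}(b)$, substitute into the definition (\ref{d}), and finish with the identity $\max\{x,y\}-\min\{x,y\}=\left\vert x-y\right\vert$. Your added remark about finite support justifying the termwise subtraction is a small point of rigor the paper leaves implicit, but it does not change the argument.
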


\begin{proof}
Since \cite{MR1739433}
\[
\operatorname{lcm}(a,b)=%
{\displaystyle\prod\limits_{p\in\mathbb{P}}}
p^{\max\left\{  \nu_{p}(a),\nu_{p}(b)\right\}  },\quad\gcd(a,b)=%
{\displaystyle\prod\limits_{p\in\mathbb{P}}}
p^{\min\left\{  \nu_{p}(a),\nu_{p}(b)\right\}  },
\]
then%
\[
d(a,b)=%
{\displaystyle\sum\limits_{p\in\mathbb{P}}}
\left[  \max\left\{  \nu_{p}(a),\nu_{p}(b)\right\}  -\min\left\{  \nu
_{p}(a),\nu_{p}(b)\right\}  \right]  .
\]
But for any real numbers $x,y$%
\[
\max\left\{  x,y\right\}  -\min\left\{  x,y\right\}  =\left\{
\begin{array}
[c]{c}%
x-y,\quad x\geq y\\
y-x,\quad x\leq y
\end{array}
\right.  =\left\vert x-y\right\vert
\]
and the result follows.
\end{proof}

\begin{corollary}
$\left(  \mathbb{N},d\right)  $ is a metric space.
\end{corollary}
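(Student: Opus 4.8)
The plan is to read off all four metric axioms directly from the representation (\ref{dabs}), which expresses $d(a,b)$ as a sum of absolute values of differences of valuations. The conceptual point is that the map $\varphi:\mathbb{N}\rightarrow\mathbb{N}_{0}^{\mathbb{P}}$ sending $n$ to the sequence $\left(\nu_{p}(n)\right)_{p\in\mathbb{P}}$ of its $p$-adic valuations realizes every natural number as a sequence of non-negative integers with only finitely many nonzero entries (since only finitely many primes divide $n$). Under this identification, (\ref{dabs}) says precisely that $d(a,b)=\left\Vert \varphi(a)-\varphi(b)\right\Vert _{1}$, i.e.\ $d$ is the pullback along $\varphi$ of the $\ell_{1}$ metric. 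Since the pullback of a metric along an injective map is again a metric, the whole statement reduces to (i) checking that $\varphi$ is injective and (ii) recalling that $\left\Vert\cdot\right\Vert_{1}$ induces a metric on sequences.

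First I would dispose of the three easy axioms using (\ref{dabs}) termwise. Non-negativity is immediate because each summand $\left\vert\nu_{p}(a)-\nu_{p}(b)\right\vert$ is non-negative, and symmetry follows from $\left\vert x-y\right\vert=\left\vert y-x\right\vert$ applied in each term. For the triangle inequality, given $a,b,c\in\mathbb{N}$ I would invoke the scalar inequality $\left\vert \nu_{p}(a)-\nu_{p}(c)\right\vert\leq\left\vert \nu_{p}(a)-\nu_{p}(b)\right\vert+\left\vert \nu_{p}(b)-\nu_{p}(c)\right\vert$ for each prime $p$ and sum over $p$; this is exactly the coordinatewise proof that $\left\Vert\cdot\right\Vert_{1}$ satisfies the triangle inequality. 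One small bookkeeping remark worth including is that all of these sums have only finitely many nonzero terms, so no question of convergence arises and comparing them is legitimate.

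The only axiom with genuine content is the identity of indiscernibles, and this is where I expect the real work --- such as it is --- to lie, because it is the one place the fundamental theorem of arithmetic is indispensable. Clearly $a=b$ gives $d(a,b)=0$. Conversely, if $d(a,b)=0$ then a sum of non-negative terms vanishes, forcing $\left\vert\nu_{p}(a)-\nu_{p}(b)\right\vert=0$, hence $\nu_{p}(a)=\nu_{p}(b)$, for every $p\in\mathbb{P}$; in other words $\varphi(a)=\varphi(b)$. By the factorization $n=\prod_{p}p^{\nu_{p}(n)}$ recorded in the first definition, equality of all valuations forces $a=b$, which is precisely the injectivity of $\varphi$. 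Assembling the four verified properties yields that $d$ is a metric on $\mathbb{N}$, proving the corollary.
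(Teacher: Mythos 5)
Your proof is correct and takes exactly the route the paper intends: the paper offers no explicit proof of the corollary, stating only that the metric axioms are ``clear'' from the representation (\ref{dabs}), and your termwise verification (with identity of indiscernibles resting on unique factorization) is precisely that implicit argument spelled out. Your framing of $d$ as the pullback of the $\ell_{1}$ metric along the valuation map even anticipates the paper's own later remark that $\Psi(x)=\left(\nu_{p_{k}}(x)\right)_{k=1}^{\infty}$ is an isometry into $l_{1}$.
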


Using the metric $d,$ we can give a nice topological interpretation to the set
of prime numbers $\mathbb{P}$.

\begin{example}
If we denote by $\overline{B}_{r}(x)$ the closed ball of radius $r$ centered
at $x,$ i.e.,%
\[
\overline{B}_{r}(x)=\left\{  y\mid d(x,y)\leq r\right\}  ,
\]
we have%
\[
\overline{B}_{1}(1)=\mathbb{P}\text{.}%
\]

\end{example}

Just as the absolute value is a \emph{translation invariant metric}, i.e.,%
\[
\left\vert \left(  x+z\right)  -\left(  y+z\right)  \right\vert =\left\vert
x-y\right\vert ,
\]
the distance $d(a,b)$ is a \emph{multiplicative invariant metric}.

\begin{proposition}
If $a,b,c\in\mathbb{N},$ then%
\begin{equation}
d(ac,bc)=d(a,b). \label{dac}%
\end{equation}

\end{proposition}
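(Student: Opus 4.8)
The plan is to use the closed-form representation from the theorem rather than the original definition, since the sum-of-absolute-differences formula makes the multiplicative invariance almost transparent. The key fact I need is how the $p$-adic valuation behaves under multiplication, which is exactly the totally additive property underlying the earlier lemma on $\Omega$.

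First I would recall that for each prime $p$, the $p$-adic valuation satisfies $\nu_p(xy) = \nu_p(x) + \nu_p(y)$; this is the prime-by-prime version of the additivity of $\Omega$ stated in the lemma, and it follows directly from the fundamental theorem of arithmetic. Applying it to the products $ac$ and $bc$ gives, for every $p \in \mathbb{P}$,
\[
\nu_p(ac) = \nu_p(a) + \nu_p(c), \qquad \nu_p(bc) = \nu_p(b) + \nu_p(c).
\]

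Next I would substitute these into the representation \eqref{dabs} supplied by the theorem. Writing out $d(ac,bc)$ as a sum over primes of $\left\vert \nu_p(ac) - \nu_p(bc) \right\vert$, the term $\nu_p(c)$ appears with opposite signs inside each absolute value and cancels:
\[
\left\vert \nu_p(ac) - \nu_p(bc) \right\vert
= \left\vert \bigl(\nu_p(a) + \nu_p(c)\bigr) - \bigl(\nu_p(b) + \nu_p(c)\bigr) \right\vert
= \left\vert \nu_p(a) - \nu_p(b) \right\vert .
\]
Summing over all $p \in \mathbb{P}$ then recovers $d(a,b)$ by \eqref{dabs}, which completes the argument.

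I do not anticipate a genuine obstacle here: once the theorem's formula is in hand, the proposition reduces to the elementary observation that adding the same valuation $\nu_p(c)$ to both arguments leaves each absolute difference unchanged — the exact multiplicative analogue of the translation invariance of the absolute value noted just before the statement. The only point requiring a word of care is justifying $\nu_p(xy) = \nu_p(x) + \nu_p(y)$, but this is immediate from unique factorization and is precisely the per-prime content of the additivity lemma already available in the paper.
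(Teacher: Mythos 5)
Your proof is correct, but it takes a different route from the paper's. You work from the representation (\ref{dabs}), using the per-prime additivity $\nu_{p}(xy)=\nu_{p}(x)+\nu_{p}(y)$ so that $\nu_{p}(c)$ cancels inside each absolute value; this makes the analogy with translation invariance of the absolute value completely transparent, and it needs nothing beyond unique factorization once the representation theorem is in hand. The paper instead works directly from the original definition (\ref{d}): it invokes the classical identities $\operatorname{lcm}(ac,bc)=c\operatorname{lcm}(a,b)$ and $\gcd(ac,bc)=c\gcd(a,b)$, then applies the total additivity of $\Omega$ from (\ref{ab}), so that $\Omega(c)$ cancels in the difference $\mathbf{\Omega}\left[\operatorname{lcm}(ac,bc)\right]-\mathbf{\Omega}\left[\gcd(ac,bc)\right]$. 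The trade-off: your argument is termwise and self-contained modulo the theorem giving (\ref{dabs}), while the paper's stays entirely within the lcm/gcd framework of the definition and never touches individual valuations, at the cost of citing two gcd/lcm scaling identities from the literature. Both are valid and essentially one-line computations; the cancellation just happens at a different level (per prime for you, globally in $\Omega$ for the paper).
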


\begin{proof}
We have \cite{MR0195783}%
\[
\operatorname{lcm}(ac,bc)=c\operatorname{lcm}(a,b)\text{ \ and \ }%
\gcd(ac,bc)=c\gcd(a,b).
\]
Thus, from (\ref{ab}) we conclude that%
\[
\mathbf{\Omega}\left[  \operatorname{lcm}(ac,bc)\right]  -\mathbf{\Omega
}\left[  \gcd(ac,bc)\right]  =\mathbf{\Omega}\left[  \operatorname{lcm}%
(a,b)\right]  -\mathbf{\Omega}\left[  \gcd(a,b)\right]
\]
and the result follows.
\end{proof}

We should now check that $d$ satisfies the properties (\ref{Prop1}) and
(\ref{Prop2}).

\begin{theorem}
If $a,b\in\mathbb{N}$ and $a<b,$ then%
\[
d(a,b)=1\Leftrightarrow\exists p\in\mathbb{P}\text{ \ such that }b=ap.
\]

\end{theorem}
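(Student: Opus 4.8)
The plan is to work entirely from the representation (\ref{dabs}), since it expresses $d(a,b)$ as a sum of nonnegative integers indexed by the primes, one contribution $\left\vert \nu_{p}(a)-\nu_{p}(b)\right\vert$ per prime $p$. Both directions of the biconditional then reduce to elementary observations about these terms, so I would prove the two implications separately.

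For the implication ($\Leftarrow$), I would assume $b=ap$ for some $p\in\mathbb{P}$ and compute the valuations directly. Multiplying by a single prime $p$ raises the exponent of $p$ by exactly one and leaves every other exponent unchanged, so $\nu_{p}(b)=\nu_{p}(a)+1$ while $\nu_{q}(b)=\nu_{q}(a)$ for every prime $q\neq p$. Substituting into (\ref{dabs}), every term of the sum vanishes except the one at $p$, which equals $1$; hence $d(a,b)=1$.

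For the implication ($\Rightarrow$), I would assume $d(a,b)=1$ and exploit the integrality of the summands. A sum of nonnegative integers equals $1$ precisely when exactly one term is $1$ and all the others are $0$. Letting $p$ be the prime for which $\left\vert \nu_{p}(a)-\nu_{p}(b)\right\vert =1$, it follows that $\nu_{q}(a)=\nu_{q}(b)$ for every $q\neq p$, so $a$ and $b$ share the same prime factorization apart from the exponent of $p$, which differs by one. Factoring out the common part then yields $b/a=p^{\pm1}$.

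The one step that requires genuine care — and where the hypothesis $a<b$ finally enters — is resolving the sign. The equation $\left\vert \nu_{p}(a)-\nu_{p}(b)\right\vert =1$ leaves two possibilities, $b=ap$ or $a=bp$, and the latter would force $b<a$, contradicting $a<b$. Hence $b=ap$, which completes the argument. I expect this sign resolution to be the only non-automatic point, since everything else follows mechanically once (\ref{dabs}) and the additivity of $\Omega$ are in hand.
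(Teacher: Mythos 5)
Your proposal is correct and follows essentially the same route as the paper: both rest on the representation (\ref{dabs}), the observation that a sum of nonnegative integers equals $1$ exactly when one term is $1$ and the rest vanish, and the use of $a<b$ to rule out $a=bp$. The only cosmetic difference is that you split the biconditional into two implications, whereas the paper argues both directions at once through a chain of equivalences.
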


\begin{proof}
It is clear that
\[
\left\vert \nu_{p}(a)-\nu_{p}(b)\right\vert \in\mathbb{N}_{0},\quad\forall
p\in\mathbb{P}\text{.}%
\]
Thus, from (\ref{dabs}) we have%
\[
1=d(a,b)=%
{\displaystyle\sum\limits_{p\in\mathbb{P}}}
\left\vert \nu_{p}(a)-\nu_{p}(b)\right\vert
\]
if and only if $\exists p\in\mathbb{P}$ such that $\left\vert \nu_{p}%
(a)-\nu_{p}(b)\right\vert =1$ and
\[
\left\vert \nu_{q}(a)-\nu_{q}(b)\right\vert =0\quad\forall q\in\mathbb{P}%
\backslash\left\{  p\right\}  .
\]
Since $a<b,$ we conclude that%
\[
\nu_{p}(b)=\nu_{p}(a)+1\text{ \ and \ }\nu_{q}(a)=\nu_{q}(b)\quad\forall
q\in\mathbb{P}\backslash\left\{  p\right\}
\]
or, equivalently, $b=ap.$
\end{proof}

\begin{theorem}
If $a,b\in\mathbb{N}$, $l=\operatorname{lcm}(a,b)$ and $g=\gcd(a,b),$ then%
\[
d(a,l)+d(l,b)=d(a,b)=d(a,g)+d(g,b).
\]

\end{theorem}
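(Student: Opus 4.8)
The plan is to reduce the claim, via the absolute-value representation (\ref{dabs}), to a statement about a single prime at a time. Recalling that
\[
\nu_{p}(l)=\max\left\{ \nu_{p}(a),\nu_{p}(b)\right\} ,\qquad \nu_{p}(g)=\min\left\{ \nu_{p}(a),\nu_{p}(b)\right\} ,
\]
I would fix an arbitrary prime $p$, abbreviate $\alpha=\nu_{p}(a)$ and $\beta=\nu_{p}(b)$, and observe that both $\max\left\{ \alpha,\beta\right\} $ and $\min\left\{ \alpha,\beta\right\} $ lie in the closed interval whose endpoints are $\alpha$ and $\beta$.

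The key step is the elementary real-line ``betweenness'' equality: if a real number $c$ satisfies $\min\left\{ \alpha,\beta\right\} \leq c\leq\max\left\{ \alpha,\beta\right\} $, then $\left\vert \alpha-c\right\vert +\left\vert c-\beta\right\vert =\left\vert \alpha-\beta\right\vert $. Applying this with $c=\nu_{p}(l)$ gives $\left\vert \alpha-\nu_{p}(l)\right\vert +\left\vert \nu_{p}(l)-\beta\right\vert =\left\vert \alpha-\beta\right\vert $, and applying it with $c=\nu_{p}(g)$ gives the analogous identity through $g$. Thus both desired equalities already hold term by term in $p$. Summing over all $p\in\mathbb{P}$ (only finitely many terms are nonzero) and invoking (\ref{dabs}) then yields $d(a,l)+d(l,b)=d(a,b)$ and $d(a,g)+d(g,b)=d(a,b)$ simultaneously.

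An alternative, and arguably slicker, route works directly from the original definition (\ref{d}) together with the total additivity (\ref{ab}). Since $a\mid l$ and $b\mid l$, one has $d(a,l)=\Omega(l)-\Omega(a)$ and $d(l,b)=\Omega(l)-\Omega(b)$; since $g\mid a$ and $g\mid b$, one has $d(a,g)=\Omega(a)-\Omega(g)$ and $d(g,b)=\Omega(b)-\Omega(g)$. Because $\operatorname{lcm}(a,b)\gcd(a,b)=ab$, additivity gives $\Omega(l)+\Omega(g)=\Omega(a)+\Omega(b)$, so both sums collapse to $\Omega(l)-\Omega(g)=d(a,b)$.

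I do not expect a genuine obstacle here: the content is entirely bookkeeping, and the whole argument hinges on a single elementary fact—either the betweenness equality on the real line, or equivalently the identity $\Omega(l)+\Omega(g)=\Omega(a)+\Omega(b)$. Once the appropriate representation is chosen, the proof is immediate, which is exactly why the claim is stated as the geometric counterpart of property (\ref{Prop2}).
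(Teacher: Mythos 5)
Your proposal is correct, and your primary argument takes a genuinely different route from the paper's. The paper works from the original definition (\ref{d}): it computes $\max\{\nu_{p}(a),\nu_{p}(l)\}=\max\{\nu_{p}(a),\nu_{p}(b)\}$ and $\min\{\nu_{p}(a),\nu_{p}(l)\}=\nu_{p}(a)$ to arrive at $d(a,l)+d(l,b)=2\Omega(l)-\Omega(a)-\Omega(b)$, and then---this is precisely the step your first argument avoids---invokes the identity $ab=\operatorname{lcm}(a,b)\gcd(a,b)$ from (\ref{lcmgdc}) together with total additivity (\ref{ab}) to collapse this to $\Omega(l)-\Omega(g)=d(a,b)$, handling the $\gcd$ half symmetrically. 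Your first route instead works prime by prime from the representation (\ref{dabs}) and rests on the betweenness equality $\vert\alpha-c\vert+\vert c-\beta\vert=\vert\alpha-\beta\vert$ for $\min\{\alpha,\beta\}\leq c\leq\max\{\alpha,\beta\}$; since $\nu_{p}(l)$ and $\nu_{p}(g)$ are exactly the max and min of $\nu_{p}(a)$ and $\nu_{p}(b)$, both equalities hold coordinatewise before any summation, and no multiplicative identity is needed. This is arguably the more illuminating proof: it exhibits $l$ and $g$ as metrically between $a$ and $b$ in the coordinatewise sense, which dovetails with the isometry $\Psi$ into $l_{1}$ that the paper introduces in its final section. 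Your second, alternative route is essentially the paper's proof in streamlined form: observing $a\mid l$ and $g\mid a$ gives the four distances directly, and the collapse $\Omega(l)+\Omega(g)=\Omega(a)+\Omega(b)$ is exactly the paper's combined use of (\ref{ab}) and (\ref{lcmgdc}). The only detail worth spelling out there is why $a\mid l$ implies $d(a,l)=\Omega(l)-\Omega(a)$, namely that $\operatorname{lcm}(a,l)=l$ and $\gcd(a,l)=a$ in that case.
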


\begin{proof}
We have%
\[
\max\left\{  \nu_{p}(a),\nu_{p}(l)\right\}  =\max\left\{  \nu_{p}%
(a),\max\left\{  \nu_{p}(a),\nu_{p}(b)\right\}  \right\}  =\max\left\{
\nu_{p}(a),\nu_{p}(b)\right\}
\]
and%
\[
\min\left\{  \nu_{p}(a),\nu_{p}(l)\right\}  =\min\left\{  \nu_{p}%
(a),\max\left\{  \nu_{p}(a),\nu_{p}(b)\right\}  \right\}  =\nu_{p}(a).
\]
Hence,%
\begin{equation}
d(a,l)+d(l,b)=2\mathbf{\Omega}\left[  \operatorname{lcm}(a,b)\right]
-\Omega(a)-\Omega(b).\label{1}%
\end{equation}
Using \cite{MR0195783}%
\begin{equation}
ab=\operatorname{lcm}(a,b)\gcd(a,b)\label{lcmgdc}%
\end{equation}
and (\ref{ab}) in (\ref{1}), we obtain%
\[
d(a,l)+d(l,b)=\mathbf{\Omega}\left[  \operatorname{lcm}(a,b)\right]
-\mathbf{\Omega}\left[  \gcd(a,b)\right]  =d(a,b).
\]

Since%
\[
\max\left\{  \nu_{p}(a),\nu_{p}(g)\right\}  =\max\left\{  \nu_{p}%
(a),\min\left\{  \nu_{p}(a),\nu_{p}(b)\right\}  \right\}  =\nu_{p}(a)
\]
and%
\[
\min\left\{  \nu_{p}(a),\nu_{p}(g)\right\}  =\min\left\{  \nu_{p}%
(a),\min\left\{  \nu_{p}(a),\nu_{p}(b)\right\}  \right\}  =\min\left\{
\nu_{p}(a),\nu_{p}(b)\right\}  ,
\]
then%
\begin{equation}
d(a,g)+d(g,b)=\Omega(a)+\Omega(b)-2\mathbf{\Omega}\left[  \gcd(a,b)\right]
.\label{2}%
\end{equation}
Using (\ref{ab}) and (\ref{lcmgdc}) in (\ref{1}), the result follows.
\end{proof}

The number of elements in the set $S_{k}=\left\{  m\in\mathbb{N}\mid
\Omega(m)=k\right\}  $ is clearly infinite. A more interesting question would
be to describe the number of elements in the set $S_{k}\cap I_{n}$ as
$n\rightarrow\infty.$ We have \cite[22.18]{MR568909}
\begin{equation}
\#\left(  S_{k}\cap I_{n}\right)  \sim\frac{n}{\ln(n)}\frac{\left[  \ln
\ln(n)\right]  ^{k-1}}{\left(  k-1\right)  !},\quad n\rightarrow
\infty,\label{num theo}%
\end{equation}
where $\#$ represents cardinality and $a\sim b$ means that $\frac{a}%
{b}\rightarrow1$ as $n\rightarrow\infty.$ The proof of (\ref{num theo}) is
beyond the reach of this paper, since it contains (or depends on) a proof of
the Prime Number Theorem \cite{MR1994094}. 

What we can do instead is  estimate the maximum distance between two numbers
in the set $I_{n},$ which we will do in the next section.

\subsection{The diameter of $I_{n}$}

\begin{definition}
If $s\in\left(  0,\infty\right)  ,\ p\in\left(  1,\infty\right)  ,$ let%
\begin{equation}
\xi_{p}\left(  s\right)  =\max\left\{  k\in\mathbb{N}\quad|\quad p^{k}\leq
s\right\}  .\label{xi}%
\end{equation}

\end{definition}

The next couple of lemmas follow immediately from the definition of $\xi
_{p}\left(  s\right)  .$

\begin{lemma}
If $s\in\left(  0,\infty\right)  ,\ p\in\left(  1,\infty\right)  ,$ then%
\[
\xi_{p}\left(  s\right)  =\left\lfloor \frac{\ln(s)}{\ln(p)}\right\rfloor ,
\]
where%
\[
\left\lfloor x\right\rfloor =\max\left\{  k\in\mathbb{Z}\quad|\quad k\leq
x\right\}  .
\]

\end{lemma}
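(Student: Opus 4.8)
The plan is to exploit the strict monotonicity of the logarithm to turn the exponential inequality defining $\xi_p(s)$ into a linear one. Since $p\in(1,\infty)$ we have $\ln(p)>0$, and since $\ln$ is strictly increasing on $(0,\infty)$, for any integer $k$ the inequality $p^{k}\leq s$ holds if and only if $k\ln(p)\leq\ln(s)$. Dividing by the positive quantity $\ln(p)$, this is in turn equivalent to $k\leq\ln(s)/\ln(p)$.

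With this equivalence in hand, the sets
\[
\left\{ k\;\middle|\;p^{k}\leq s\right\} \quad\text{and}\quad\left\{ k\;\middle|\;k\leq\tfrac{\ln(s)}{\ln(p)}\right\}
\]
coincide, so their maxima agree. By the definition recalled in the statement, the largest integer satisfying $k\leq\ln(s)/\ln(p)$ is exactly $\left\lfloor \ln(s)/\ln(p)\right\rfloor$, and substituting this back into (\ref{xi}) yields the claimed formula.

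The only point requiring genuine care — and hence what I would treat as the main obstacle, modest as it is — is the index set over which the maximum is taken. The definition (\ref{xi}) maximizes over $k\in\mathbb{N}$, whereas $\left\lfloor \cdot\right\rfloor$ is a maximum over $k\in\mathbb{Z}$. These two maxima agree precisely when $\ln(s)/\ln(p)\geq 1$, i.e. when $s\geq p$, so that the largest admissible exponent is already a positive integer and no negative or zero value of $k$ can sneak into the floor. This is exactly the regime in which $\xi_p$ is applied in the diameter computation, so I would either restrict attention to $s\geq p$ or note that the identity is understood in that range. Apart from this bookkeeping about the domain, the argument is an immediate consequence of the monotonicity of $\ln$ and carries no analytic difficulty.
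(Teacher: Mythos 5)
Your proof is correct and is precisely the argument the paper has in mind: the paper offers no proof at all, remarking only that the lemma "follows immediately from the definition of $\xi_p(s)$," and the monotonicity-of-$\ln$ equivalence $p^k\leq s\Leftrightarrow k\leq\ln(s)/\ln(p)$ is that immediate argument. Your caveat about the mismatch between the maximum over $k\in\mathbb{N}$ in (\ref{xi}) and the maximum over $k\in\mathbb{Z}$ defining the floor is moreover a genuine point the paper silently ignores: when $s<p$ the set in (\ref{xi}) is empty and $\xi_p(s)$ is strictly speaking undefined (e.g.\ $\xi_3(2)$, which the diameter theorem does invoke for $n=2$), so the stated identity holds only for $s\geq p$ unless one tacitly extends the maximum to $k\in\mathbb{N}_0$ or $k\in\mathbb{Z}$, exactly as you say.
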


\begin{lemma}
If $m\in\left(  0,\infty\right)  ,\ p\in\left(  1,\infty\right)  ,$ then

\begin{enumerate}
\item
\begin{equation}
\xi_{q}\left(  m\right)  \leq\xi_{p}\left(  m\right)  ,\qquad\text{if \ }p\leq
q.\label{pq}%
\end{equation}

\item
\begin{equation}
\xi_{p}\left(  m\right)  \leq\xi_{p}\left(  n\right)  \qquad\text{if \ }m\leq
n.\label{mn}%
\end{equation}

\end{enumerate}
\end{lemma}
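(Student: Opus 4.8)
The plan is to prove both monotonicity statements directly from the defining formula (\ref{xi}), using the elementary principle that the maximum of a set of exponents can only grow when the set itself grows. For each part I would exhibit an inclusion between two index sets and then pass to the maximum; the floor representation of the preceding lemma gives a convenient alternative.

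For the first inequality, fix $m$ and suppose $1 < p \le q$. Set
\[
A_p = \{\, k \in \mathbb{N} \mid p^k \le m \,\}, \qquad A_q = \{\, k \in \mathbb{N} \mid q^k \le m \,\},
\]
so that $\xi_p(m) = \max A_p$ and $\xi_q(m) = \max A_q$ by (\ref{xi}). The crucial observation is that $p \le q$ forces $p^k \le q^k$ for every $k \ge 0$; hence $q^k \le m$ implies $p^k \le q^k \le m$, i.e. $A_q \subseteq A_p$. Taking maxima of the nested sets gives $\xi_q(m) \le \xi_p(m)$, which is (\ref{pq}).

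For the second inequality, fix $p$ and suppose $m \le n$. Writing $A_p^{(m)} = \{\, k \in \mathbb{N} \mid p^k \le m \,\}$, the inclusion $A_p^{(m)} \subseteq A_p^{(n)}$ is immediate, since $p^k \le m \le n$; taking maxima yields (\ref{mn}). Alternatively, both parts can be read off from the representation $\xi_p(m) = \lfloor \ln(m)/\ln(p) \rfloor$ of the preceding lemma, combined with the monotonicity of the logarithm and of the floor function.

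There is essentially no hard step here; the only point that deserves a word of care is that the $\max$ in (\ref{xi}) requires the corresponding index set to be nonempty. This is guaranteed whenever $m$ is at least as large as the base in question, which is always the case in the intended application, where the arguments are integers at least $2$. I would simply restrict attention to that range, in which both inclusion arguments go through verbatim.
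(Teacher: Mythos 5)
Your proposal is correct and is exactly the argument the paper intends: the paper offers no written proof at all, stating only that these lemmas ``follow immediately from the definition of $\xi_{p}(s)$,'' and your set-inclusion argument (with the floor-formula alternative) is precisely that immediate verification. Your closing caveat about nonemptiness of the set in (\ref{xi}) is a fair observation about a small defect in the paper's own definition (the maximum over $\{k\in\mathbb{N}\mid p^{k}\leq s\}$ is undefined when $s<p$, e.g.\ for $\xi_{3}(2)$, which the paper later uses), and restricting to the range where the sets are nonempty, or reading $k\in\mathbb{N}_{0}$, resolves it.
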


We can now obtain a first estimate comparing the growth of $\Omega\left(
n\right)  $ and $\xi_{p}\left(  n\right)  .$

\begin{lemma}
Let $n$ $\in\mathbb{N}.$ Then,

\begin{enumerate}
\item For all $n$ $\in\mathbb{N}$
\begin{equation}
\Omega\left(  n\right)  \leq\xi_{2}\left(  n\right)  .\label{even}%
\end{equation}

\item If $n$ $\in\mathbb{N}$ is odd, then%
\begin{equation}
\Omega\left(  n\right)  \leq\xi_{3}\left(  n\right)  .\label{odd}%
\end{equation}

\end{enumerate}
\end{lemma}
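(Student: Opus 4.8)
The plan is to exploit that every prime appearing in the factorization of $n$ is at least $2$ (and at least $3$ when $n$ is odd), so that the total prime count $\Omega(n)$ is pinned down by a pure power of the smallest admissible prime, a power that cannot exceed $n$ itself.

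For part 1, I would begin from the factorization $n = \prod_{p \in \mathbb{P}} p^{\nu_p(n)}$ furnished by the fundamental theorem of arithmetic. Since $p \geq 2$ for every prime, replacing each base by $2$ can only decrease the product, so $n \geq \prod_{p} 2^{\nu_p(n)} = 2^{\sum_p \nu_p(n)} = 2^{\Omega(n)}$, where the exponent collapses by the definition (\ref{Omega}). Taking logarithms to base $2$ yields $\Omega(n) \leq \ln(n)/\ln(2)$. Because $\Omega(n)$ is a nonnegative integer, it is bounded by the floor of the right-hand side, and the preceding lemma identifying $\xi_2(n) = \lfloor \ln(n)/\ln(2) \rfloor$ gives $\Omega(n) \leq \xi_2(n)$ at once.

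For part 2, the only modification is that oddness forces $\nu_2(n) = 0$, so $2$ is absent from the factorization and every prime actually dividing $n$ satisfies $p \geq 3$. Hence the sum in (\ref{Omega}) ranges effectively over primes $p \geq 3$, bounding each such base below by $3$ produces $n \geq 3^{\Omega(n)}$, and the identical logarithm-and-floor argument, now with the lemma's formula for $\xi_3$, delivers $\Omega(n) \leq \xi_3(n)$. There is no genuinely hard step: the two parts share one structure, and the only points needing care are confirming that the exponent sum truly reduces to $\Omega(n)$ (immediate from the definition) and checking the boundary case $n = 1$, where $\Omega(1) = 0$ agrees with $\lfloor \ln(1)/\ln(p) \rfloor = 0$, so both inequalities hold with equality.
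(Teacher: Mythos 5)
Your proof is correct and follows essentially the same route as the paper: both rest on the key inequality $2^{\Omega(n)}=\prod_{p}2^{\nu_p(n)}\leq\prod_{p}p^{\nu_p(n)}=n$ (and $3^{\Omega(n)}\leq n$ when $\nu_2(n)=0$). The only cosmetic difference is that the paper concludes directly from the definition (\ref{xi}) of $\xi_p$ as the largest $k$ with $p^k\leq n$, whereas you pass through logarithms and the floor formula of the preceding lemma, which amounts to the same thing.
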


\begin{proof}

\begin{enumerate}
\item Since%
\[
2^{\Omega(n)}=%
{\displaystyle\prod\limits_{p\in\mathbb{P}}}
2^{\nu_{p}(n)}\leq%
{\displaystyle\prod\limits_{p\in\mathbb{P}}}
p^{\nu_{p}(n)}=n,
\]
the result follows from (\ref{xi}).

\item Similarly, if $n$ is odd, then%
\[
3^{\Omega(n)}=%
{\displaystyle\prod\limits_{p\in\mathbb{P}}}
3^{\nu_{p}(n)}\leq%
{\displaystyle\prod\limits_{p\in\mathbb{P}}}
p^{\nu_{p}(n)}=n,
\]
since $\nu_{2}(n)=0.$
\end{enumerate}
\end{proof}

We have now all the necessary elements to prove our result on the diameter of
$I_{n}.$

\begin{theorem}
Let $n$ $\in\mathbb{N}.$ Then,
\[
\delta\left(  I_{n}\right)  =\xi_{2}\left(  n\right)  +\xi_{3}\left(
n\right)  ,
\]
where%
\[
\delta(A)=\sup\left\{  d(x,y)\mid x,y\in A\right\}
\]
is the diameter of $A$.
\end{theorem}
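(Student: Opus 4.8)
The plan is to prove the equality by establishing the two inequalities $\delta(I_n) \geq \xi_2(n) + \xi_3(n)$ and $\delta(I_n) \leq \xi_2(n) + \xi_3(n)$ separately. Throughout I would work from the representation (\ref{dabs}) rather than the original definition (\ref{d}), since the sum of absolute differences of valuations is the most convenient form for extremal arguments.

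For the lower bound I would simply exhibit a pair attaining the claimed value. Take $x = 2^{\xi_2(n)}$ and $y = 3^{\xi_3(n)}$; by the definition (\ref{xi}) both lie in $I_n$, and they are coprime, so no prime divides both. In (\ref{dabs}) every term then vanishes except those for $p=2$ and $p=3$, giving $d(x,y) = \xi_2(n) + \xi_3(n)$. Since the diameter is a supremum over all pairs in $I_n$, this already forces $\delta(I_n) \geq \xi_2(n) + \xi_3(n)$.

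The upper bound is the part I expect to require the real idea, and the key move is to reduce an arbitrary pair to a coprime one. Given $x,y \in I_n$, set $g = \gcd(x,y)$, $u = x/g$, $v = y/g$. Then $u \leq x \leq n$ and $v \leq y \leq n$, so $u,v \in I_n$, while $\gcd(u,v)=1$. By the multiplicative invariance (\ref{dac}), $d(x,y) = d(u,v)$, and because $u,v$ are coprime we have $\operatorname{lcm}(u,v) = uv$ and $\gcd(u,v)=1$, so by the additivity (\ref{ab}) the distance collapses to $d(u,v) = \Omega(u) + \Omega(v)$. Thus it suffices to bound $\Omega(u) + \Omega(v)$ over coprime pairs $u,v \in I_n$.

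Here the primes $2$ and $3$ enter through a parity split, which is the crux of the argument. Since $u$ and $v$ are coprime they cannot both be even, so at least one is odd. If exactly one is even, say $u$, then (\ref{even}) gives $\Omega(u) \leq \xi_2(u) \leq \xi_2(n)$ and (\ref{odd}) gives $\Omega(v) \leq \xi_3(v) \leq \xi_3(n)$, where the final step in each uses monotonicity (\ref{mn}); hence $\Omega(u)+\Omega(v) \leq \xi_2(n)+\xi_3(n)$. If both are odd, then (\ref{odd}) applies to each, yielding $\Omega(u)+\Omega(v) \leq 2\xi_3(n)$, and since $2 \leq 3$ the prime-monotonicity (\ref{pq}) gives $\xi_3(n) \leq \xi_2(n)$, so again $\Omega(u)+\Omega(v) \leq \xi_2(n)+\xi_3(n)$. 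In every case $d(x,y) \leq \xi_2(n)+\xi_3(n)$, and combining with the lower bound completes the proof.
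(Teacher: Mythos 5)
Your proof is correct, and it rests on the same ingredients as the paper's: the witness pair $x=2^{\xi_{2}(n)}$, $y=3^{\xi_{3}(n)}$ for the lower bound, division by $\gcd(x,y)$ combined with the multiplicative invariance (\ref{dac}), and the parity case analysis driven by (\ref{even}), (\ref{odd}), (\ref{pq}) and (\ref{mn}). Where you genuinely differ is in the decomposition of the upper bound. The paper begins with the triangle inequality through $1$, namely $d(x,y)\leq d(x,1)+d(1,y)=\Omega(x)+\Omega(y)$, splits into three cases according to the parity of $x$ and $y$ themselves, and invokes the gcd reduction only in the both-even case, folding it back into the previously treated case. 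You instead perform the gcd reduction once, at the very start: for the coprime pair $u=x/g$, $v=y/g$ you obtain from (\ref{ab}) and (\ref{d}) the \emph{exact} identity $d(x,y)=d(u,v)=\Omega(u)+\Omega(v)$, with no triangle inequality needed, and coprimality automatically eliminates the both-even case, leaving only two cases. Your organization is the cleaner one: it replaces an inequality by an equality, makes it transparent that the extremal pairs are coprime prime powers, and shortens the case analysis; the paper's version, by contrast, keeps the elementary bound $d(x,y)\leq\Omega(x)+\Omega(y)$ valid for every pair in view, which is perhaps more intuitive but costs an extra case and a slightly awkward reuse of case (b) inside case (c). Both arguments are complete and correct.
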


\begin{proof}
Let $x,y\in I_{n}.$ Then,
\[
d(x,y)\leq d(x,1)+d(1,y)=\Omega\left(  x\right)  +\Omega\left(  y\right)  .
\]
We have three possibilities:

(a) If $x$ and $y$ are odd numbers then, from (\ref{pq}), (\ref{mn}) and
(\ref{odd}) we have%
\[
\Omega\left(  x\right)  +\Omega\left(  y\right)  \leq\xi_{3}\left(  x\right)
+\xi_{3}\left(  y\right)  \leq\xi_{2}\left(  n\right)  +\xi_{3}\left(
n\right)  .
\]

(b) If $x$ or $y$ is an odd number then, from (\ref{even}), (\ref{mn}) and
(\ref{odd}) we get%
\begin{equation}
d(x,y)\leq\Omega\left(  x\right)  +\Omega\left(  y\right)  \leq\xi_{2}\left(
x\right)  +\xi_{3}\left(  y\right)  \leq\xi_{2}\left(  n\right)  +\xi
_{3}\left(  n\right)  .\label{(b)}%
\end{equation}

(c) If $x$ and $y$ are even numbers, let
\[
g=\gcd(x,y)\text{ \ and \ }x=ag,\quad y=bg.\text{\ }%
\]
Then $a,b\in I_{n}$ and $a$ or $b$ is an odd number. Using (\ref{dac}),
(\ref{mn}) and (\ref{(b)}) we obtain
\[
d(x,y)=d(a,b)\leq\xi_{2}\left(  n\right)  +\xi_{3}\left(  n\right)  .
\]
Hence, we conclude that%
\[
\delta\left(  I_{n}\right)  \leq\xi_{2}\left(  n\right)  +\xi_{3}\left(
n\right)  .
\]

On the other hand, letting
\[
x=2^{\xi_{2}\left(  n\right)  },\quad y=3^{\xi_{3}\left(  n\right)  }%
\]
we have $x,y\in I_{n}$ and therefore%
\[
\xi_{2}\left(  n\right)  +\xi_{3}\left(  n\right)  =d(x,y)\leq\delta\left(
I_{n}\right)  .
\]

\end{proof}

In the next section, we will extend the definition of $d$ to a bigger subset
of the real numbers, which contains the rational numbers.

\section{Extension}

We remind the reader that $l_{1}$ is the space of absolutely summable
sequences, i.e.,%
\[
l_{1}=\left\{  \left(  b_{k}\right)  _{k=1}^{\infty}\mid\left\Vert \left(
b_{k}\right)  _{k=1}^{\infty}\right\Vert _{1}<\infty\right\}  ,
\]
where $\left(  b_{k}\right)  _{k=1}^{\infty}$ represents the sequence
$b_{1},b_{2},\ldots,$ and the norm $\left\Vert \cdot\right\Vert _{1}$ on
$l_{1}$ is defined by%
\[
\left\Vert \left(  b_{k}\right)  _{k=1}^{\infty}\right\Vert _{1}=%
{\displaystyle\sum\limits_{k=1}^{\infty}}
\left\vert b_{k}\right\vert .
\]

\begin{definition}
Let $\mathbb{M}$ be defined by%
\[
\mathbb{M}=\left\{  x\in\mathbb{R}^{+}\mid x=%
{\displaystyle\prod\limits_{k=1}^{\infty}}
p_{k}^{\alpha_{k}}\text{ \ and }\left(  \alpha_{k}\ln k\right)  _{k=1}%
^{\infty}\in l_{1}\right\}  ,
\]
where $\mathbb{P}=\left\{  p_{1},p_{2,}p_{3},\ldots\right\}  .$

With the notation above, we define $\nu_{p_{k}}(x)=\alpha_{k}$ for
$x\in\mathbb{M}$ and%
\[
\Omega\left(  x\right)  =%
{\displaystyle\sum\limits_{k=1}^{\infty}}
\nu_{p_{k}}(x).
\]

\end{definition}

\begin{remark}
\begin{enumerate}
\item Clearly $\mathbb{Q}^{+}\subset\mathbb{M},$ but also irrational numbers
like $\sqrt[n]{a},$for $a\in\mathbb{N}.$

\item The condition $\left(  \alpha_{k}\ln k\right)  _{k=1}^{\infty}\in l_{1}$
warranties the existence of the infinite product, since from the Prime Number
Theorem \cite{MR1994094} we have $p_{k}\sim k\ln(k)$ as $k\rightarrow\infty.$

\item If $x\in\mathbb{M},$ we get%
\[
\Omega\left(  x\right)  =%
{\displaystyle\sum\limits_{k=1}^{\infty}}
\alpha_{k}<%
{\displaystyle\sum\limits_{k=1}^{\infty}}
\left\vert \alpha_{k}\right\vert \ln\left(  k\right)  <\infty.
\]

\end{enumerate}
\end{remark}

We can now extend our definition (\ref{d}).

\begin{definition}
Let $x,y\in\mathbb{M}.$ We define the distance $d(x,y)$ by%
\begin{equation}
d(x,y)=%
{\displaystyle\sum\limits_{p\in\mathbb{P}}}
\left\vert \nu_{p}(x)-\nu_{p}(y)\right\vert . \label{d1}%
\end{equation}

\end{definition}

\begin{remark}
If we define the function $\Psi:\mathbb{M}\rightarrow l_{1}$ by%
\[
\Psi(x)=\left(  \nu_{p_{k}}(x)\right)  _{k=1}^{\infty},
\]
then from (\ref{d1}) we see that $\Psi$ is an isometry \cite{MR0085462}
between the metric spaces $\left(  \mathbb{M},d\right)  $ and $\left(
l_{1},\left\Vert \cdot\right\Vert _{1}\right)  .$
\end{remark}

\smallskip

\end{document}